\newcommand{\Z}{{\mathbb{Z}}}
\newcommand{\N}{{\mathbb{N}}}
\newtheorem{thm}{Theorem} %% [section]
\newtheorem{prop}[thm]{Proposition}
\newtheorem*{corollary}{Corollary}
\begin{document}

\title{Symmetric Shannon capacity is the independence number minus $1$}
%%\author[]{}
%%\address{RAMKI\\
%%Reáltanoda u. 13-15.\\
%%1035 Budapest, Hungary}
%%\email{}
%%\author[]{}
%%\address{Universit\'e de Gen\`eve\\
%%rue du Li\`evre 2-4, Case postale 64\\
%%1211 Gen\`eve 4, Switzerland}
%%\email{terpai@math.elte.hu}

\subjclass[2010]{05C69}
\keywords{Shannon capacity}

\author{Tam\'as Terpai}
\address{Lor\'and E\"otv\"os University, Department of Analysis\\1117 Budapest, P\'azm\'any P\'eter s\'et\'any 1/C\\Hungary}
\email{terpai@math.elte.hu}

\begin{abstract}
A symmetric variant of Shannon capacity is defined and computed.
\end{abstract}

%%\begin{comment}

\maketitle

\renewcommand{\thefootnote}{}
\footnotetext{{\it Acknowledgement:} The author is grateful to P\'eter Frenkel for coming up with the definition of symmetric Shannon capacity and to Endre Cs\'oka for organizing the Si\'ofok Workshop on Graphs and Graph Limits where the related questions were disseminated.}

\section{Introduction and motivation}
Given a finite graph $G$ with vertices $V=\{ v_1,\dots,v_n \}$ and a total weight $k\in \N$, we consider the following graph $G[k]$. The vertices of $G[k]$, forming the set $V[k]$, are configurations of $k$ identical pebbles in the vertices of $G$; that is, functions $f:V \to \Z_{\geq 0}$ such that $f(v_1)+\dots+f(v_n)=k$, which we consider also as equivalence classes of maps from a set of cardinality $k$ to $V$ with two maps being equivalent if they only differ in a permutation of the $k$ elements. We connect two vertices $f$ and $g$ of $G[k]$ if we can move some pebbles in the configuration $f$ to neighbours in $G$ to obtain the configuration $g$. The motivation for this construction is that if the pebbles were distinguishable, then it would yield the $k$-times strong direct power $G^{\boxtimes k}$ and the asymptotics of its independence number with respect to $k$ would define the Shannon capacity of $G$, which is a notoriously hard to compute graph parameter (e.g. computing it even for the $5$-cycle takes a lot of effort, see \cite{L}). In fact, the symmetric group $S_k$ acts on $G^{\boxtimes k}$ by permuting the coordinates, and $G[k] = G^{\boxtimes k}/S_k$.
\par
The graph $G[k]$ has ${k+n-1 \choose n-1}=O\left(k^{n-1}\right)$ vertices, therefore its independence number $\alpha(G[k])$ can also grow only polynomially. Set
$$
F(G) = \lim_{k\to \infty} \frac{\log \alpha(G[k])}{\log k}.
$$
A priori it is not clear whether such a limit exists: in contrast to the case of distinguishable pebbles, there is no obvious superadditivity property for $\log \alpha(G[k])$ that would ensure convergence.
It is however easy to see that $\alpha(G[k]) \geq {k+\alpha(G)-1 \choose \alpha(G)-1}$ -- the configurations supported on a fixed independent set $W \subset V$ are clearly independent in $G[k]$ -- and that $\alpha(G[k]) \leq {k+\theta(G)-1 \choose \theta(G)-1}$, where $\theta(G)$, the clique covering number of $G$, is the minimal number of cliques covering the vertices of $G$ -- for any clique covering $V=V_1 \cup \dots \cup V_\theta$ whenever the configurations $f$, $g \in V[k]$ have the same number of pebbles on the vertices of all $V_j$, they can be moved into one another by using only the edges within the $V_j$.

\section{C${}_5$ in particular}
The simplest graph for which $\theta(G)>\alpha(G)$ (and thus the estimates above do not determine $\alpha(G[k])$) is the $5$-cycle $C_5$. It can be checked that for $k \leq 9$ we have $\alpha(C_5[k])=k+1$, but for general $k$ we can only prove the following, weaker result:
\begin{prop}
For any natural number $k$ we have $\alpha(C_5[k]) \leq \left\lfloor\frac{5(k+2)(k+1)}{2(k+5)}\right\rfloor$.
\end{prop}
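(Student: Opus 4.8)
The plan is to pass to a fractional relaxation. Since the adjacency relation of $C_5[k]$ is symmetric, an independent set is a set of configurations that are pairwise non-routable, so $\alpha(C_5[k])\le \bar\chi_f(C_5[k])$, the fractional clique cover number. It therefore suffices to exhibit a fractional clique cover of total weight $\tfrac{5(k+2)(k+1)}{2(k+5)}$ and then, since $\alpha$ is an integer, to insert the floor at the end.

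First I would make adjacency completely explicit. Deciding whether $f$ can be moved to $g$ is a transportation problem: one seeks $a_{uw}\ge 0$, supported on pairs with $w\in\{u-1,u,u+1\}$, having row sums $f_u$ and column sums $g_w$. By the max-flow--min-cut (Gale supply--demand) feasibility criterion this is solvable iff $\sum_{w\in A} g_w\le \sum_{u\in N(A)} f_u$ for every $A\subseteq\Z_5$, and on $C_5$ only singletons and consecutive pairs give non-vacuous constraints. Writing $g(S)=\sum_{v\in S} g_v$ and letting $\bar v=\{v+2,v+3\}$ denote the edge opposite a vertex $v$ (a bijection between vertices and edges of $C_5$), the criterion collapses to the symmetric form
\[
f\sim g \iff f_v+g(\bar v)\le k \ \text{and}\ g_v+f(\bar v)\le k \ \text{for all } v\in\Z_5 .
\]
Thus $f,g$ are \emph{non-adjacent} exactly when some antipodal pair $(v,\bar v)$ has one of these two sums exceeding $k$, which is manifestly symmetric in $f,g$.

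With this in hand I would assemble the cover from a $D_5$-symmetric family of cliques, using the dihedral symmetry to keep the bookkeeping finite. The characterization immediately produces several kinds of cliques: the configurations supported on a single edge $\{v,v+1\}$ form a clique of size $k+1$; the set $\{f:f_v\le\lfloor k/3\rfloor\ \forall v\}$ is a large (``balanced'') clique, since there $f_v+g(\bar v)\le 3\lfloor k/3\rfloor\le k$; and the colour classes of any clique cover of $C_5$, e.g. the fibres of $f\mapsto(f_1+f_2,f_3+f_4,f_5)$, are cliques. The $\binom{k+2}{2}$ fibres alone give only the crude bound $\binom{k+2}{2}$; the extra factor $\tfrac{5}{k+5}$ must come from covering the balanced bulk of $V[k]$ by a bounded number of large cliques while spending weight $\Theta(k)$ only near the boundary. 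I would take the cover weights $D_5$-invariant, reducing the covering LP to one indexed by $D_5$-orbits of configurations, and solve it in closed form.

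The main obstacle is precisely this cover, and within it the \emph{antipodally peaked} configurations $f=(a,0,b,0,0)$ with $a+b=k$ (and their images under $D_5$). Here the characterization gives $f(\bar 1)=f(\bar 5)=b$, $f(\bar 3)=f(\bar 4)=a$, $f(\bar 2)=0$, so a neighbour $g$ of $f$ must satisfy $g_1+g_5\le a$ and $g_3+g_4\le b$, forcing the remaining $k-a-b$ pebbles onto the common neighbour $2$; in particular distinct such configurations are pairwise non-adjacent, so at least $k+1$ cliques are needed for each non-edge. It is the exact rate at which the covering weight of these peaked orbits can be shared with the large central cliques that produces the denominator $k+5$. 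Verifying that the chosen $D_5$-symmetric weights cover every orbit with total weight $\ge 1$ — the peaked orbits being the tight case — and that the weights sum to $\tfrac{5(k+2)(k+1)}{2(k+5)}$ is the technical heart; the concluding step $\alpha(C_5[k])\le\lfloor\bar\chi_f(C_5[k])\rfloor$ is then routine.
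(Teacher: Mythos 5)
Your framework is sound --- $\alpha \le \bar\chi_f$ is a valid bound, and your Hall/Gale characterization of adjacency ($f\sim g$ iff $f_v+g(\bar v)\le k$ and $g_v+f(\bar v)\le k$ for every vertex $v$, with $\bar v$ the opposite edge) is correct and is in fact a cleaner statement than anything in the paper. But what you have written is a plan, not a proof: the fractional clique cover of weight $\frac{5(k+2)(k+1)}{2(k+5)}$ is never exhibited. You name candidate cliques (configurations supported on one edge, the balanced clique, the fibres of a clique-cover map), observe that the peaked configurations $(a,0,b,0,0)$ are pairwise non-adjacent so the cover must spend weight $\Theta(k)$ somewhere, and then declare that choosing $D_5$-invariant weights and solving the resulting LP in closed form is ``the technical heart'' --- which you leave undone. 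There is no evidence that an LP over your particular family of cliques attains the value $\frac{5(k+2)(k+1)}{2(k+5)}$ rather than something larger, and the entire content of the proposition lives in that computation. (A minor slip: the remark about ``forcing the remaining $k-a-b$ pebbles onto vertex $2$'' is vacuous, since $a+b=k$.)

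The gap is fillable, and in a way that fits your framework exactly; this is essentially what the paper does. Index cliques by \emph{edge configurations}: distributions $\psi$ of the $k$ pebbles on the five edges, declared adjacent to a vertex configuration $f$ if $\psi$ arises from $f$ by sliding every pebble onto an incident edge. The set of vertex configurations adjacent to a fixed $\psi$ is a clique, since a common $\psi$ witnesses a pebble transport between any two of them. Now restrict to those $\psi$ vanishing on some pair of non-adjacent edges $\{e_{j-1},e_{j+1}\}$, and give each such $\psi$ weight $m(\psi)/(k+5)$, where $m(\psi)$ is the number of indices $j$ that work. A short count shows that every $f$ is adjacent to exactly $f(v_{j+3})+1$ configurations of type $j$, hence to total multiplicity $\sum_{j}\bigl(f(v_{j+3})+1\bigr)=k+5$, so every vertex configuration is covered with weight exactly $1$, while the total weight is $5\binom{k+2}{2}/(k+5)=\frac{5(k+2)(k+1)}{2(k+5)}$. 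Until you supply either this cover or a completed version of your own, the proof is incomplete.
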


\begin{proof}
For simplicity let us index the vertices of $C_5$ by natural numbers from $1$ to $5$ in such a way that vertices $v_j$ and $v_{j+1}$ are connected by an edge -- labelled $e_j$ -- for $j=1,\dots,4$, and the vertices $v_1$ and $v_5$ are also connected by the edge $e_5$.

\begin{figure}[h]
\resizebox{0.4\columnwidth}{!}{\input{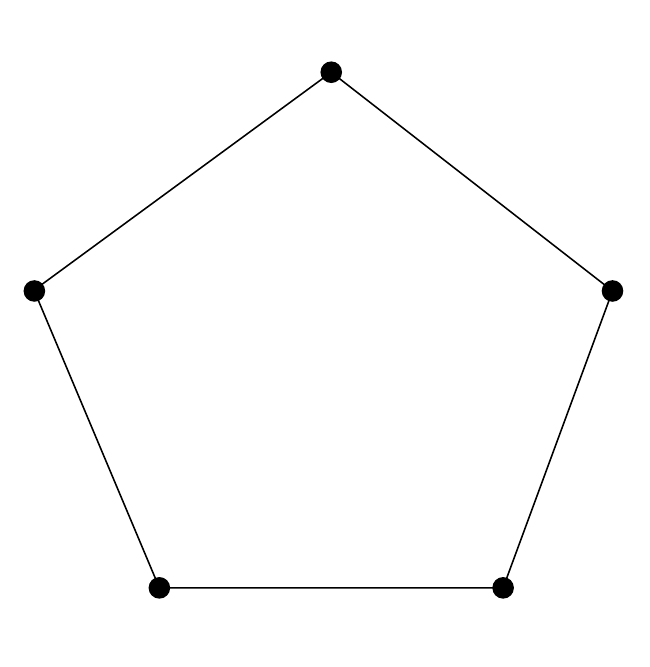_t}}
\end{figure}

Consider {\it edge configurations} of weight $k$ on $C_5$: functions $\psi:E(C_5) \to \Z_{\geq 0}$ such that $\psi(e_1)+\dots+\psi(e_5)=k$, also considered as ways to distribute $k$ indistinguishable pebbles among the edges of $C_5$. We declare an edge configuration $\psi$ and a vertex configuration $f$ to be adjacent if one can obtain $\psi$ from $f$ by moving each pebble to an adjacent edge. Note that if $f$ and $g$ are vertex configurations, then they are neighbours in $C_5[k]$ exactly if there exists an edge configuration $\psi$ adjacent to both of them. As a consequence, if $A\subset V[k]$ is an independent set, then the sets of edge configurations adjacent to the elements of $A$ are disjoint; in particular, the sum of their sizes cannot exceed $k+4 \choose 4$, the total number of edge configurations of weight $k$. Unfortunately, the upper bound on $\vert A \vert$ obtained from this observation is insufficient as there are vertex configurations that are adjacent to few edge configurations: for example, only $k+1$ edge configurations are adjacent to the vertex configuration where all the pebbles are assigned to the same vertex. To circumvent this problem, we count only edge configurations that assign zero weight to at least one pair of non-adjacent edges. Let $S=S_1\cup S_2\cup S_3 \cup S_4 \cup S_5$ denote the set of such edge configurations, where $S_j$ is the set of those edge configurations that assign zero weight to edges $e_{j-1}$ and $e_{j+1}$ (with indices considered modulo $5$). Finally, as the sets $S_j$ are not disjoint, we shall consider each edge configuration weighted by the number of sets $S_j$ that contain it.

Now let $f \in V[k]$ be an arbitrary vertex configuration. Edge configurations in $S_1$ that are adjacent to $f$ have no pebbles on edges $e_2$ and $e_5$ and hence must be formed by moving all pebbles in $v_1$ and $v_2$ to the edge $e_1$, all pebbles in $v_3$ to the edge $e_3$ and all pebbles in $v_5$ to the edge $e_4$. The only freedom left is the distribution of the pebbles in $v_4$ to $e_3$ and $e_4$; the number of such distributions is $f(v_4)+1$. Repeating this argument for $S_2$ to $S_5$, we count $f(v_1)+1+\dots+f(v_5)+1=k+5$ edge configurations adjacent to $f$, each as many times as there are sets $S_j$ that contain it. The total weight of the edge configurations in $S$ is $\vert S_1 \vert + \dots + \vert S_5 \vert=5{k+2 \choose 2}$, hence there cannot be more than $\left\lfloor \frac{5{k+2 \choose 2}}{k+5} \right\rfloor = \left\lfloor \frac{5(k+2)(k+1)}{2(k+5)} \right\rfloor$ pairwise non-neighbouring vertex configurations, as claimed.
\end{proof}

\begin{corollary}
$F(C_5)=1$.
\end{corollary}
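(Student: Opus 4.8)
The plan is to squeeze the quantity $\frac{\log \alpha(C_5[k])}{\log k}$ between two sequences that both converge to $1$; this simultaneously establishes the existence of the limit defining $F(C_5)$ and pins down its value. Since $\alpha(C_5)=2$ and $\theta(C_5)=3$, the general bounds recorded in the introduction do not match, so I cannot read off the answer directly from them. Instead I would pair the elementary lower bound with the sharper upper bound just proved in the Proposition.

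First I would record the lower bound. The inequality $\alpha(G[k]) \geq \binom{k+\alpha(G)-1}{\alpha(G)-1}$ specializes, for $\alpha(C_5)=2$, to $\alpha(C_5[k]) \geq \binom{k+1}{1} = k+1$. Taking logarithms and dividing gives $\frac{\log \alpha(C_5[k])}{\log k} \geq \frac{\log(k+1)}{\log k}$, whose right-hand side tends to $1$ as $k\to\infty$.

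Next I would invoke the Proposition for the upper bound. We have $\alpha(C_5[k]) \leq \left\lfloor\frac{5(k+2)(k+1)}{2(k+5)}\right\rfloor \leq \frac{5(k+2)(k+1)}{2(k+5)}$, and the right-hand side is $\Theta(k)$: its ratio to $k$ stays between two fixed positive constants for all large $k$. Hence $\log \alpha(C_5[k]) \leq \log k + O(1)$, so $\frac{\log \alpha(C_5[k])}{\log k} \leq 1 + \frac{O(1)}{\log k}$, and this too tends to $1$.

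Combining the two estimates by the squeeze theorem shows that $\lim_{k\to\infty}\frac{\log \alpha(C_5[k])}{\log k}$ exists and equals $1$, which is the assertion $F(C_5)=1$. I do not expect a genuine obstacle here: the only subtlety flagged earlier — that the limit defining $F$ need not exist a priori — is dispatched precisely because for $C_5$ the lower and upper bounds are both polynomial of degree $1$ in $k$, so the logarithmic ratio is forced to converge regardless of the intermediate behaviour of $\alpha(C_5[k])$.
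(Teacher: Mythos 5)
Your argument is correct and is precisely the intended (if unwritten) derivation in the paper: the lower bound $\alpha(C_5[k])\ge k+1$ from the introduction combined with the Proposition's upper bound $O(k)$ forces $\frac{\log\alpha(C_5[k])}{\log k}\to 1$. Nothing is missing, and you rightly note that the squeeze also settles the a priori existence of the limit.
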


\par
\noindent{\bf Open question:} is $\alpha(C_5[k])=k+1$ for all $k$?
\par

\section{General case}

\begin{thm}
For any finite graph $G$ we have $\alpha(G[k])=O(k^{\alpha(G)-1})$.
\end{thm}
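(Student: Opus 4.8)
The plan is to induct on the number of vertices $n=|V(G)|$ and to split an independent set $A\subseteq V[k]$ according to the behaviour of its configurations at one carefully chosen vertex $v$. Everything hinges on a transportation description of adjacency, which I would establish first: realising a neighbour relation means moving the pebbles of $f$ to those of $g$ along closed neighbourhoods, i.e.\ a feasible flow in the incidence where a pebble at $u$ may serve any vertex of $N[u]$. Since supply and demand both equal $k$, the defect form of Hall's theorem (max-flow--min-cut) gives: two distinct configurations $f,g$ are \emph{non}-adjacent exactly when some $T\subseteq V$ satisfies $g(T)>f(N[T])$, where $N[T]=\bigcup_{t\in T}N[t]$ and $g(T)=\sum_{t\in T}g(t)$. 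I will write $f\approx g$ for adjacency in $G[k]$.

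Fix a vertex $v$ and partition $A$ into the configurations with $f(v)=0$ and those with $f(v)\ge 1$. For the first family, a flow witnessing $f\approx g$ with $f(v)=g(v)=0$ never loads $v$, so it lives in $G-v$; hence these configurations form an independent set in $(G-v)[k]$ and the inductive hypothesis bounds them by $O(k^{\alpha(G-v)-1})\le O(k^{\alpha(G)-1})$. The key elementary compatibility for the second family is that deleting a pebble at $v$ respects the relation: if $f(v),g(v)\ge 1$ and $f\not\approx g$, then $f-\mathbf 1_v\not\approx g-\mathbf 1_v$, because any flow between the smaller configurations extends to one between the larger by leaving a pebble fixed at $v$. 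Suppose first that $G$ has a vertex $v$ lying in \emph{every} maximum independent set, so $\alpha(G-v)=\alpha(G)-1$. Stratifying the second family by the value $j=f(v)\ge 1$ and applying $f\mapsto f-j\mathbf 1_v$ embeds each stratum into an independent set of $(G-v)[k-j]$; summing $\sum_{j\ge 1}\alpha\big((G-v)[k-j]\big)=O\big(k^{\alpha(G-v)}\big)=O(k^{\alpha(G)-1})$ closes this case.

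The crux, and the step I expect to be genuinely hard, is the remaining case in which \emph{no} vertex belongs to all maximum independent sets, so that $\alpha(G-v)=\alpha(G)$ for every $v$ and the naive pebble-deletion recursion overshoots by one power of $k$. Here I would instead choose $v$ inside a maximum independent set, so that $\alpha(G-N[v])=\alpha(G)-1$, and try to extract the saving from $G-N[v]$: a pebble on $v$ dominates all of $N[v]$ (each vertex of $N(v)$ can push its pebbles onto $v$), suggesting that the $f(v)\ge 1$ configurations are really governed by the smaller graph $G-N[v]$, with only the single scalar ``mass on $N[v]$'' contributing the extra power, for a total exponent $(\alpha-2)+1=\alpha-1$. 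The obstacle is that a pebble at $v$ does \emph{not} rigidify how the remaining mass is spread over $N[v]$, and the transportation criterion couples the distribution inside $N[v]$ with the one on $W:=V\setminus N[v]$; the technical heart is to prove that the freedom to redistribute mass inside $N[v]$ cannot be combined with the freedom of an independent family in $(G-N[v])[\,\cdot\,]$ to yield more than $O(k^{\alpha-1})$ pairwise non-adjacent configurations.

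An alternative route I would keep in reserve is a direct count against edge configurations, exactly in the spirit of the $C_5$ argument above. For a graph without isolated vertices, $f\approx g$ holds precisely when some edge configuration is simultaneously adjacent to $f$ and to $g$, so for independent $A$ the sets of edge configurations adjacent to distinct elements are disjoint. One would then restrict, counted with multiplicity, to edge configurations supported on suitably chosen edge sets — of size $\alpha+1$ in the $C_5$ case — so that the total weight is $\Theta(k^{\alpha})$ while every vertex configuration meets weight $\Theta(k)$; the quotient is then $O(k^{\alpha-1})$. Producing a weighted family whose per-configuration lower bound is uniform over all of $G$ is precisely what the ad hoc sets $S_1,\dots,S_5$ achieved for $C_5$, and generalising that design to an arbitrary graph is the main difficulty of this second approach.
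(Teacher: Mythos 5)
Your preparatory steps are correct: the Hall-type transportation criterion for adjacency, the treatment of the $f(v)=0$ configurations via $(G-v)[k]$, and the pebble-deletion argument in the special case where some vertex lies in every maximum independent set are all sound. But the proof stops exactly at the point you yourself flag as ``the technical heart'', and that remaining case is the entire content of the theorem: for the first nontrivial example $C_5$ (and for every vertex-transitive graph with $\theta>\alpha$) no vertex lies in all maximum independent sets, so $\alpha(G-v)=\alpha(G)$ for every $v$ and your completed cases give nothing beyond the trivial bound. You correctly sense that the saving must come from $G\setminus N[v]$ with $\alpha(G\setminus N[v])\le\alpha(G)-1$, but you supply no mechanism for decoupling the distribution of mass inside $N[v]$ from the configuration outside it: a single pebble at $v$ cannot absorb the redistribution within $N[v]$ that the transportation criterion requires, which is precisely the obstacle you name without overcoming.

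The missing idea, which is how the paper closes this gap, is twofold. First, do not fix $v$ in advance: partition the independent set according to which vertex $v_j$ carries the \emph{maximum} weight, so that within each piece the distinguished vertex holds at least $k/n$ pebbles and can serve as a hub. Second, within each piece fix not only the total mass $m$ on the closed neighbourhood of $v_j$ but also the weight of \emph{every} vertex up to an additive resolution of $k/(2n^2)$; this costs only $O_n(k)$ chunks in total, since the number of resolution classes is bounded independently of $k$. Inside a chunk, any two configurations differ on each vertex of $N[v_j]$ by less than $k/(2n^2)$, so the discrepancy can be routed entirely through the heavy vertex $v_j$ (total traffic under $k/(2n)\le f(v_j)$); hence if their restrictions to $G\setminus N[v_j]$ coincided or were adjacent in $(G\setminus N[v_j])[k-m]$, the original configurations would coincide or be adjacent in $G[k]$. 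This forces each chunk to restrict to an independent set of $(G\setminus N[v_j])[k-m]$, of size $O(k^{\alpha(G)-2})$ by induction, and summing over $O_n(k)$ chunks gives the theorem. That quantitative rigidification of the $N[v]$-part is what your sketch lacks; your reserve plan of generalising the $C_5$ edge-configuration count is likewise only stated as a goal, with no candidate for the weighted family playing the role of $S_1,\dots,S_5$ in a general graph.
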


\begin{proof}
We prove our statement by induction on the cardinality of $V$. For $\vert V \vert =1$ the claim is trivial. Assume now that the statement holds for all graphs with less vertices than $\vert V \vert$, and let $H \subset V[k]$ be an arbitrary independent vertex set in $G[k]$. Partition $H$ into pieces $H_1$, $\dots$, $H_n$ such that in $H_j$ the vertex $v_j$ carries the greatest weight; in particular, this weight is at least $\frac{k}{n}$.
\par
Denote by $N(v_j) \subset V$ the union of the vertex $v_j$ and its neighbours in $G$. Each $H_j$ we further divide into chunks $H_{j; m, b_1, \dots, b_n}$, where the chunk $H_{j; m, b_1, \dots, b_n}$ contains those elements of $H_j$ in which the total weight of $v_j$ and its neighbours is $m$ and for all $i \leq n$ the weight of the vertex $v_i$ falls into the interval $\left[b_i\frac{k}{2n^2},(b_i+1)\frac{k}{2n^2}\right)$. The index $j$ can take $n$ different values, $m$ can take $k+1$ different values, and each $b_i$ can be assumed to be between $0$ and $2n^2$ (the rest are always empty), so altogether we get at most $n(k+1)(2n^2+1) = O(k)$ chunks (with an implied dependence on $n$). Pick now an arbitrary chunk $H_{j; m, b_1, \dots, b_n}$ and consider its elements as configurations on $G \setminus N(v_j)$ (with the weights on the elements of $N(v_j)$ omitted). These configurations all have weight $k-m$ and we claim that they form an independent set in $(G\setminus N(v_j))[k-m]$. Indeed, if any two configurations $f$ and $g$ in $H_{j; m, b_1, \dots, b_n}$ either coincide or are neighbours in $(G\setminus N(v_j))[k-m]$, then there exists a pebble transport from $f$ to $g$ on $G\setminus N(v_j)$. On the other hand, there exists a pebble transport from $f$ to $g$ on $N(v_j)$ as well: we send $f(v_i)-\left \lceil b_i\frac{k}{2n^2} \right\rceil$ pebbles from each vertex $v_i$ that is a neighbour of $v_j$ to $v_j$, and we send $g(v_i)-\left\lceil b_i\frac{k}{2n^2} \right\rceil$ pebbles from $v_j$ to each $v_i$ that is a neighbour of $v_j$. Combining these two pebble transports yields a pebble transport from $f$ to $g$.
\par
This means that the chunk $H_{j; m, b_1, \dots, b_n}$ has size at most $O(k^{\alpha(G\setminus N(v_j))-1})$ by the induction hypothesis. But any independent set in $G\setminus N(v_j)$ can be extended by $v_j$ to form an independent set in $G$, hence $\alpha(G\setminus N(v_j)) \leq \alpha(G)-1$. Summing this estimate for all the $O(k)$ chunks we get that $\vert H\vert = O(k^{\alpha(G)-1})$, proving the induction step.
\end{proof}

\begin{corollary}
For any finite graph $G$ we have $F(G)=\alpha(G)-1$.
\end{corollary}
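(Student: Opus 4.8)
The plan is to combine the two matching polynomial bounds on $\alpha(G[k])$ into a squeeze argument for the logarithmic ratio defining $F(G)$. The upper bound $\alpha(G[k]) = O\left(k^{\alpha(G)-1}\right)$ is exactly the content of the Theorem just proved, while the lower bound $\alpha(G[k]) \geq \binom{k+\alpha(G)-1}{\alpha(G)-1}$ was already recorded in the introduction. Since these two bounds have the same polynomial degree in $k$, the logarithmic ratio should be pinned down with no further work.

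First I would observe that, for fixed $\alpha(G)$, the lower bound is a polynomial in $k$ of degree $\alpha(G)-1$: indeed $\binom{k+\alpha(G)-1}{\alpha(G)-1} = \frac{(k+1)(k+2)\cdots(k+\alpha(G)-1)}{(\alpha(G)-1)!}$ has leading coefficient $\frac{1}{(\alpha(G)-1)!}$, so its logarithm equals $(\alpha(G)-1)\log k + O(1)$. Taking logarithms in the upper bound $\alpha(G[k]) \leq C\, k^{\alpha(G)-1}$ (with $C$ depending only on $G$) likewise gives $\log \alpha(G[k]) \leq (\alpha(G)-1)\log k + O(1)$. Combining the two yields $(\alpha(G)-1)\log k + O(1) \leq \log \alpha(G[k]) \leq (\alpha(G)-1)\log k + O(1)$, where the implied constants depend only on $G$.

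Dividing through by $\log k$ for $k \geq 2$ and letting $k \to \infty$, the error terms $\frac{O(1)}{\log k}$ both tend to $0$, so the squeeze theorem delivers simultaneously the existence of the limit $F(G)$ and its value $\alpha(G)-1$. There is essentially no hard step here; the only point worth emphasizing is the one flagged in the introduction, namely that the existence of the limit is \emph{not} obtained from any superadditivity of $\log \alpha(G[k])$, but rather falls out automatically once the same polynomial degree $\alpha(G)-1$ appears in matching upper and lower bounds. Thus the entire substance of the corollary is carried by the Theorem, and this final step is purely a matter of reading off the asymptotics.
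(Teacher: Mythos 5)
Your proof is correct and is exactly the argument the paper leaves implicit: the lower bound $\alpha(G[k]) \geq \binom{k+\alpha(G)-1}{\alpha(G)-1}$ from the introduction and the upper bound $\alpha(G[k]) = O\left(k^{\alpha(G)-1}\right)$ from the Theorem squeeze $\frac{\log \alpha(G[k])}{\log k}$ to the limit $\alpha(G)-1$, which also establishes that the limit exists. Nothing further is needed.
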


\noindent{\bf Open question:} is $\alpha(G[k])={k+\alpha(G)-1 \choose \alpha(G)-1}$, or at least does
$$
\lim_{k \to \infty} \frac{\alpha(G[k])}{k^{\alpha(G)-1}}=\frac{1}{(\alpha(G)-1)!}
$$
hold?

\end{document}